\providecommand{\U}[1]{\protect\rule{.1in}{.1in}}
\providecommand{\U}[1]{\protect\rule{.1in}{.1in}}
\providecommand{\U}[1]{\protect\rule{.1in}{.1in}}
\providecommand{\U}[1]{\protect\rule{.1in}{.1in}}
\newtheorem{theorem}{Theorem}[section]
\theoremstyle{definition}
\newtheorem{remark}[theorem]{Remark}
\begin{document}
\title[The Orlicz inequality for multilinear forms]{The Orlicz inequality for multilinear forms}
\author[D. N\'{u}\~{n}ez]{Daniel N\'{u}\~{n}ez-Alarc\'{o}n}
\address{Departamento de Matem\'{a}ticas\\
\indent Universidad Nacional de Colombia\\
\indent111321 - Bogot\'{a}, Colombia}
\email{dnuneza@unal.edu.co}
\author[D. Pellegrino]{Daniel Pellegrino}
\address{Departamento de Matem\'{a}tica \\
Universidade Federal da Para\'{\i}ba \\
58.051-900 - Jo\~{a}o Pessoa, Brazil.}
\email{daniel.pellegrino@academico.ufpb.br}
\author[D. Serrano]{Diana Serrano-Rodr\'{\i}guez}
\address{Departamento de Matem\'{a}ticas\\
\indent Universidad Nacional de Colombia\\
\indent111321 - Bogot\'{a}, Colombia}
\email{diserranor@unal.edu.co}
\thanks{D. Pellegrino is supported by CNPq Grant 307327/2017-5 and Grant 2019/0014
Paraiba State Research Foundation (FAPESQ) }
\subjclass[2010]{47A63; 47A07 }
\keywords{Multilinear forms; sequence spaces}

\begin{abstract}
The Orlicz $\left(  \ell_{2},\ell_{1}\right)  $-mixed inequality states that
\[
\left(  \sum_{j_{1}=1}^{n}\left(  \sum_{j_{2}=1}^{n}\left\vert A(e_{j_{1}%
},e_{j_{2}})\right\vert \right)  ^{2}\right)  ^{\frac{1}{2}}\leq\sqrt
{2}\left\Vert A\right\Vert
\]
for all bilinear forms $A:\mathbb{K}^{n}\times\mathbb{K}^{n}\rightarrow
\mathbb{K}$ and all positive integers $n$, where $\mathbb{K}^{n}$ denotes
$\mathbb{R}^{n}$ or $\mathbb{C}^{n}$ endowed with the supremum norm. In this
paper we extend this inequality to multilinear forms, with $\mathbb{K}^{n}$
endowed with $\ell_{p}$ norms for all $p\in\lbrack1,\infty].$

\end{abstract}
\maketitle

\section{Introduction}

The origins of the theory of summability of multilinear forms and absolutely
summing multilinear operators are probably associated to Orlicz $\left(
\ell_{2},\ell_{1}\right)  $-mixed inequality published in the $1930$'s (see
\cite[page 24]{blei}). It states that
\[
\left(  \sum_{j_{1}=1}^{n}\left(  \sum_{j_{2}=1}^{n}\left\vert A(e_{j_{1}%
},e_{j_{2}})\right\vert \right)  ^{2}\right)  ^{\frac{1}{2}}\leq\sqrt
{2}\left\Vert A\right\Vert
\]
for all bilinear forms $A:\mathbb{K}^{n}\times\mathbb{K}^{n}\rightarrow
\mathbb{K}$, and all positive integers $n.$ Here and henceforth $\mathbb{K}%
=\mathbb{R}$ or $\mathbb{C}$ and $\mathbb{K}^{n}$ is endowed with the supremum
norm. We also represent by $e_{k}$ the canonical vectors in a sequence space
and
\[
\left\Vert A\right\Vert :=\sup\left\{  \left\vert A(x,y)\right\vert
:\left\Vert x\right\Vert \leq1\text{ and }\left\Vert y\right\Vert
\leq1\right\}  .
\]
An equivalent formulation is the following:
\begin{equation}
\left(  \sum_{j_{1}=1}^{\infty}\left(  \sum_{j_{2}=1}^{\infty}\left\vert
A(e_{j_{1}},e_{j_{2}})\right\vert \right)  ^{2}\right)  ^{\frac{1}{2}}%
\leq\sqrt{2}\left\Vert A\right\Vert \label{orlicz}%
\end{equation}
for all continuous bilinear forms $A:c_{0}\times c_{0}\rightarrow\mathbb{K}$.
The exponents in (\ref{orlicz}) are optimal in the sense that, fixing the
exponent $1$, the exponent $2$ cannot be replaced by smaller exponents (nor
the exponent $1$ can be replaced by smaller exponents) keeping the constant
independent of $n$. The Orlicz inequality is closely related to Littlewood's
$\left(  \ell_{1},\ell_{2}\right)  $-mixed inequality (see \cite[page
23]{blei}), which asserts that
\[
\sum_{j_{1}=1}^{\infty}\left(  \sum_{j_{2}=1}^{\infty}\left\vert A(e_{j_{1}%
},e_{j_{2}})\right\vert ^{2}\right)  ^{\frac{1}{2}}\leq\sqrt{2}\left\Vert
A\right\Vert
\]
for all continuous bilinear forms $A:c_{0}\times c_{0}\rightarrow\mathbb{K}$.
Again, the exponents are optimal in the same sense above described. Combining
these two inequalities, and using the H\"{o}lder inequality for mixed sums we
recover Littlewood's $4/3$ inequality:
\[
\left(  \sum_{j_{1},j_{2}=1}^{\infty}\left\vert A(e_{j_{1}},e_{j_{2}%
})\right\vert ^{\frac{4}{3}}\right)  ^{\frac{3}{4}}\leq\sqrt{2}\left\Vert
A\right\Vert
\]
for all continuous bilinear forms $A:c_{0}\times c_{0}\rightarrow\mathbb{K}$.
For recent results on absolutely summing linear and multilinear operators we
refer the interested reader to \cite{bayart5, mastilo, rueda} and the
references therein.

The exponent $4/3$ from the previous inequality cannot be replaced by smaller
exponents keeping the constant independent of $n$. The constant $\sqrt{2}$ is
optimal (in all the three inequalities) when $\mathbb{K}=\mathbb{R}$, but the
optimal constants when $\mathbb{K}=\mathbb{C}$ are unknown.

In 1934 Hardy and Littlewood \cite{hardy} (see also \cite{tonge}) pushed the
subject further, extending the above results to bilinear forms defined on
$\ell_{p}$ spaces (when $p=\infty$ we consider $c_{0}$ instead of
$\ell_{\infty}$). The investigation of extensions of the Hardy--Littlewood
inequalities to multilinear forms were initiated by Praciano-Pereira
\cite{pra} in 1981 and intensively investigated since then (see, for instance,
\cite{abps2, rezende, laaaron, blasco, mastilo2, tonge, paulino, psst}), but
there are still several open problems regarding the optimal exponents and
optimal constants involved.

For the sake of simplicity, we shall use the same notation from \cite{abps2}:%
\[
X_{p}:=\left\{
\begin{array}
[c]{c}%
\ell_{p}\text{, if }p\in\lbrack1,\infty)\\
c_{0}\text{, if }p=\infty
\end{array}
\right.
\]
and, when $q=\infty$, the sum $\left(  \sum_{j}\left\Vert x_{j}\right\Vert
^{q}\right)  ^{1/q}$ shall represent the supremum of $\left\Vert
x_{j}\right\Vert $. We also assume that $1/0=\infty$ and $1/\infty=0$ and
denote the conjugate index of $s$ by $s^{\ast},$ i.e., $1/s+1/s^{\ast}=1$. One
of the main goals of this line of research is to find the optimal values of
the exponents $s_{1},...,s_{m}$ and of the constants $C_{s_{1},...,s_{m}%
}^{\left(  \mathbb{K}\right)  \text{ }p_{1},...,p_{m}}$ satisfying%
\[
\left(  \sum_{j_{1}=1}^{\infty}...\left(  \sum_{j_{m-1}=1}^{\infty}\left(
\sum_{j_{m}=1}^{\infty}\left\vert A(e_{j_{1}},...e_{j_{m}})\right\vert
^{s_{m}}\right)  ^{\frac{s_{m-1}}{s_{m}}}\right)  ^{\frac{s_{m-2}}{s_{m-1}}%
}...\right)  ^{\frac{1}{s_{1}}}\leq C_{s_{1},...,s_{m}}^{\left(
\mathbb{K}\right)  \text{ }p_{1},...,p_{m}}\left\Vert A\right\Vert
\]
for all continuous $m$-linear forms $A:X_{p_{1}}\times\cdots\times X_{p_{m}%
}\rightarrow\mathbb{K}$. The answer is known in several cases (see
\cite{abps2, laaaron, sv} and the references therein), but a complete solution
is still unknown. In this note we shall be interested in investigating the
optimal exponents $s_{1},...,s_{m}.$ It is simple to prove that the optimal
exponent $s_{m}$ associated to the sum $\sum_{j_{m}=1}^{\infty}$ is
$p_{m}^{\ast}$. Our main result provides the optimal exponents $s_{1}%
,...,s_{m-1}$ in the case that $s_{m}=p_{m}^{\ast}.$

From now on, let $r\geq2,$ and let $s_{1},...,s_{m}\,\in\lbrack1,\infty]$. Let
us define $\delta^{s_{k},...,s_{m}}$ and $\lambda_{r}^{s_{k},...,s_{m}}$ by
\[
\delta^{s_{k},...,s_{m}}:=\frac{1}{\max\left\{  1-\left(  \frac{1}{s_{k}%
}+\cdots+\frac{1}{s_{m}}\right)  ,0\right\}  },
\]
and
\[
\lambda_{r}^{s_{k},...,s_{m}}:=\frac{1}{\max\left\{  \frac{1}{r}-\left(
\frac{1}{s_{k}}+\cdots+\frac{1}{s_{m}}\right)  ,0\right\}  },
\]
for all positive integers $m$ and $k=1,...,m$. Note that when $1/s_{k}%
+\cdots+1/s_{m}\geq1$ we have%
\[
\delta^{s_{k},...,s_{m}}=\infty
\]
and, also, when $1/s_{k}+\cdots+1/s_{m}\geq\frac{1}{r}$ we have%
\[
\lambda_{r}^{s_{k},...,s_{m}}=\infty.
\]
Our main result is, in some sense, a generalization of the the Orlicz
inequality. In fact, if we consider the very particular case $\left(
m,p_{1},p_{2}\right)  =\left(  2,\infty,\infty\right)  $ and $\sigma$ as the
identity map in its statement, we recover the Orlicz inequality:

\begin{theorem}
\label{orl}Let $m\geq2$ be an integer and $\sigma:\{1,...,m\}\rightarrow
\{1,...,m\}$ be a bijection. If
\begin{align*}
\left(  q_{1},...,q_{m-1}\right)   &  \in(0,\infty]^{m-1},\\
\left(  p_{1},...,p_{m}\right)   &  \in\lbrack1,\infty]^{m},
\end{align*}
the following assertions are equivalent:

(1) There is a constant $C_{p_{1},...,p_{m}}\geq1$ such that%
\[
\left(  \sum_{j_{\sigma(1)}=1}^{\infty}\left(  \sum_{j_{\sigma(2)}=1}^{\infty
}\cdots\left(  \sum_{j_{\sigma(m)}=1}^{\infty}\left\vert A(e_{j_{\sigma(1)}%
},...,e_{j_{\sigma(m)}})\right\vert ^{p_{\sigma(m)}^{\ast}}\right)
^{\frac{q_{m-1}}{p_{\sigma(m)}^{\ast}}}\cdots\right)  ^{\frac{q_{1}}{q_{2}}%
}\right)  ^{\frac{1}{q_{1}}}\leq C_{p_{1},...,p_{m}}\left\Vert A\right\Vert
\]
for all continuous $m$-linear forms $A:X_{p_{1}}\times\cdots\times X_{p_{m}%
}\rightarrow\mathbb{K}$.

(2) The exponents $q_{1},...,q_{m-1}$ satisfy%
\[
q_{1}\geq\delta^{p_{\sigma(1)},...,p_{\sigma(m-1)},\mu},q_{2}\geq
\delta^{p_{\sigma(2)},...,p_{\sigma(m-1)},\mu},...,q_{m-1}\geq\delta
^{p_{\sigma(m-1)},\mu},
\]
where $\mu=\min\{p_{\sigma(m)},2\}.$
\end{theorem}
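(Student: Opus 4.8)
The plan is to prove the two implications separately, treating $(2)\Rightarrow(1)$ as the positive (sufficiency) direction and $(1)\Rightarrow(2)$ as the optimality direction, the latter being where the precise shape of the exponents $\delta^{\ldots}$ is forced. For sufficiency I would first reduce to the extremal case $q_{k}=\delta^{p_{\sigma(k)},\ldots,p_{\sigma(m-1)},\mu}$ for every $k$. This is legitimate because a mixed $\ell_{q}$-norm is nonincreasing in each of its exponents, so proving $(1)$ for the smallest admissible $q_{k}$ (equivalently the largest $1/q_{k}$, the hardest case) yields it for all larger $q_{k}$. The critical inequality itself I would attack by induction on $m$, the base case $m=2$ being the Orlicz inequality and its Hardy--Littlewood $\ell_{p}$ analogues. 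One natural route for the inductive step is to view $A$ as an $(m-1)$-linear form in the variables indexed by $\sigma(2),\ldots,\sigma(m)$, apply the inductive hypothesis to the inner sums with innermost exponent $p_{\sigma(m)}^{\ast}$ (the hypotheses on $q_{2},\ldots,q_{m-1}$ are exactly those required for the reduced problem), and then recombine over $j_{\sigma(1)}$ in $\ell_{q_{1}}$. The transition from frozen variables to a genuine bound by $\Vert A\Vert$ is where the Khinchin (Kahane) inequality enters: averaging random signs in the last variable is what replaces $p_{\sigma(m)}$ by $\mu=\min\{p_{\sigma(m)},2\}$, while Minkowski's inequality and H\"{o}lder's inequality for mixed sums reassemble the pieces. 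The numerical conditions in $(2)$ are precisely the statements that the H\"{o}lder exponents appearing here are admissible, so the induction closes exactly on the boundary.

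For necessity I would test $(1)$ against explicit extremal $m$-linear forms, one family for each $k$. A single application of the Kahane--Salem--Zygmund inequality to a full $\pm1$ array only produces constraints on the partial sums $\frac{1}{q_{k}}+\cdots+\frac{1}{q_{m-1}}$, which are strictly weaker than the individual bounds in $(2)$; so the essential device is to combine a diagonal structure with randomness. To force the bound on $q_{k}$ I would take a form that is degenerate (evaluated at $e_{1}$) in the variables $\sigma(1),\ldots,\sigma(k-1)$, diagonal in the variables $\sigma(k),\ldots,\sigma(m-1)$, and carries random signs in the last variable $\sigma(m)$. The diagonal structure collapses all of the nested sums from level $k$ to level $m-1$ onto a single index, so the left-hand side of $(1)$ is computed exactly and, for all entries equal, grows like $n^{1/q_{k}+1/p_{\sigma(m)}^{\ast}}$, while the first $k-1$ sums contribute a harmless factor $1$.

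The main obstacle I anticipate is the matching upper estimate for $\Vert A\Vert$ of these test forms. Collapsing the diagonal block by H\"{o}lder's inequality reduces $A$ to a bilinear form $(\epsilon_{jl})$ on $\ell_{\rho}^{n}\times\ell_{p_{\sigma(m)}}^{n}$ with $1/\rho=\sum_{i=k}^{m-1}1/p_{\sigma(i)}$, whose norm the bilinear Kahane--Salem--Zygmund inequality bounds by $n^{1/2+(1/2-1/\rho)_{+}+(1/2-1/p_{\sigma(m)})_{+}}$. The point is that the term $(1/2-1/p_{\sigma(m)})_{+}$ is exactly what, after subtracting $1/p_{\sigma(m)}^{\ast}$, turns $p_{\sigma(m)}$ into $\mu=\min\{p_{\sigma(m)},2\}$, so that the resulting constraint is precisely $q_{k}\geq\delta^{p_{\sigma(k)},\ldots,p_{\sigma(m-1)},\mu}$. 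When $p_{\sigma(m)}\leq2$ the random signs give nothing and a purely diagonal form already produces the sharp constraint, whereas when $p_{\sigma(m)}>2$ the randomness is indispensable. Reconciling these two regimes into the single clean statement of $(2)$, and treating the degenerate cases where some $\delta$ equals $\infty$ (so that the corresponding exponent must be interpreted as a supremum), is the delicate bookkeeping that I expect to absorb most of the work.
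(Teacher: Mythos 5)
Your necessity half is essentially sound, and it is a genuinely different route from the paper's. Your bookkeeping checks out: the test form that is degenerate (at $e_{1}$) in the variables $\sigma(1),\ldots,\sigma(k-1)$, diagonal in $\sigma(k),\ldots,\sigma(m-1)$ and random in $\sigma(m)$ makes the left-hand side of (1) exactly $n^{1/q_{k}+1/p_{\sigma(m)}^{\ast}}$, the Kahane--Salem--Zygmund bound $\Vert A\Vert\lesssim n^{1/2+(1/2-1/\rho)_{+}+(1/2-1/p_{\sigma(m)})_{+}}$ with $1/\rho=\sum_{i=k}^{m-1}1/p_{\sigma(i)}$ is correct, and the cancellation $1/2-1/p_{\sigma(m)}^{\ast}+(1/2-1/p_{\sigma(m)})_{+}=-\,(1/\mu-1/2)\cdot 0 + \ldots$ does collapse to the constraint $1/q_{k}\leq\max\{1-(1/\rho+1/\mu),0\}$, i.e.\ $q_{k}\geq\delta^{p_{\sigma(k)},\ldots,p_{\sigma(m-1)},\mu}$, with the purely diagonal form covering $p_{\sigma(m)}\leq2$ (including the degenerate $\delta=\infty$ cases, which your construction forces as $q_{k}=\infty$). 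The paper proceeds quite differently: it dualizes via the isometry $\mathcal{L}^{m}(X_{p_{1}},\ldots,X_{p_{m}};\mathbb{K})\cong\mathcal{L}^{m-1}(X_{p_{1}},\ldots,X_{p_{m-1}};(X_{p_{m}})^{\ast})$, so that the innermost $\ell_{p_{m}^{\ast}}$-sum becomes the norm of an $(m-1)$-linear operator into a space of cotype $\max\{p_{m}^{\ast},2\}$, and then obtains both directions at once from Theorem \ref{cotcrit}, whose lower bounds come from Maurey--Pisier factorization and diagonal test operators $A_{n}(x^{(1)},\ldots,x^{(m)})=\sum_{j}x_{j}^{(1)}\cdots x_{j}^{(m)}z_{j}$ rather than from KSZ.

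The sufficiency half of your plan, however, has a genuine gap at the recombination step. Freezing $j_{\sigma(1)}$, applying the $(m-1)$-linear case to $A(e_{j_{\sigma(1)}},\cdot,\ldots,\cdot)$, and then ``recombining over $j_{\sigma(1)}$ in $\ell_{q_{1}}$'' cannot work: the inductive hypothesis only yields the uniform bound $S_{j_{\sigma(1)}}\leq C\Vert A(e_{j_{\sigma(1)}},\cdot,\ldots,\cdot)\Vert\leq C\Vert A\Vert$, and no application of Minkowski or mixed-norm H\"{o}lder to these frozen-variable estimates produces an $\ell_{q_{1}}$-sum with $q_{1}<\infty$ --- summing them naively gives the divergent factor $n^{1/q_{1}}$. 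Concretely, take $m=3$, $p_{1}=p_{2}=p_{3}=\infty$, so that (2) asks for $(q_{1},q_{2})=(2,2)$ with innermost exponent $1$: the bilinear Orlicz inequality gives $S_{j_{1}}\leq\sqrt{2}\Vert A\Vert$ for each fixed $j_{1}$, and recombining yields only $\sqrt{2n}\,\Vert A\Vert$. The whole content of the theorem at the extremal exponents is that the quantities $S_{j_{1}}$ are square-summable uniformly in $n$, and this is a genuinely multilinear fact about the outer variables; your Khinchin step correctly explains why $\mu=\min\{p_{\sigma(m)},2\}$ appears in the innermost block, but it contributes nothing to the outer index. The paper sidesteps this by citing the positive part of Theorem \ref{cotcrit} (which rests on Theorem \ref{661} from the literature, proved via cotype, inclusion theorems for multiple summing operators and related machinery, not frozen-variable induction); if you want a self-contained proof you would need that machinery --- or an interpolation argument between the $q_{1}=\infty$ estimate with smaller inner exponents and the fully isotropic Hardy--Littlewood estimate --- in place of your inductive step. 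Your preliminary reduction to the extremal exponents via monotonicity of mixed norms is fine.
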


\section{Preliminary results}

Let $2\leq q<\infty$ and $0<s<\infty$. Recall that a Banach space $X$ has
cotype $q$ if there is a constant $C>0$ such that, no matter how we select
finitely many vectors $x_{1},\dots,x_{n}\in X$,%
\begin{equation}
\left(  \sum_{j=1}^{n}\Vert x_{j}\Vert^{q}\right)  ^{\frac{1}{q}}\leq C\left(
\int_{[0,1]}\left\Vert \sum_{j=1}^{n}r_{j}(t)x_{j}\right\Vert ^{2}dt\right)
^{1/2}, \label{99}%
\end{equation}
where $r_{j}$ denotes the $j$-th Rademacher function. The infimum of the
cotypes of $X$ is denoted by $\cot X$.

The following result was proved in \cite{laaaron}:

\begin{theorem}
(see \cite{laaaron}) \label{661}\ Let $\left(  q_{1},...,q_{m}\right)
\in(0,\infty)^{m}$, and $Y$ be an infinite-dimensional Banach space with
cotype $\cot Y$. If
\begin{equation}
\frac{1}{p_{1}}+\cdots+\frac{1}{p_{m}}<\frac{1}{\cot Y}, \label{cond}%
\end{equation}
then the following assertions are equivalent:

(a) There is a constant $C_{p_{1},...,p_{m}}^{Y}\geq1$ such that%
\[
\left(  \sum_{j_{1}=1}^{\infty}\left(  \sum_{j_{2}=1}^{\infty}\cdots\left(
\sum_{j_{m}=1}^{\infty}\left\Vert A(e_{j_{1}},...,e_{j_{m}})\right\Vert
^{q_{m}}\right)  ^{\frac{q_{m-1}}{q_{m}}}\cdots\right)  ^{\frac{q_{1}}{q_{2}}%
}\right)  ^{\frac{1}{q_{1}}}\leq C_{p_{1},...,p_{m}}^{Y}\left\Vert
A\right\Vert
\]
for all continuous $m$-linear operators $A:X_{p_{1}}\times\cdots\times
X_{p_{m}}\rightarrow Y.$

(b) The exponents $q_{1},...,q_{m}$ satisfy
\[
q_{1}\geq\lambda_{\cot Y}^{p_{1},...,p_{m}},q_{2}\geq\lambda_{\cot Y}%
^{p_{2},...,p_{m}},...,q_{m-1}\geq\lambda_{\cot Y}^{p_{m-1},p_{m}},q_{m}%
\geq\lambda_{\cot Y}^{p_{m}}.
\]

\end{theorem}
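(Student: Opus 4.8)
The plan is to prove the two implications separately: (b)$\Rightarrow$(a) (sufficiency of the exponents) by induction on $m$, and (a)$\Rightarrow$(b) (their optimality) by an explicit construction. Write $c:=\cot Y$, so that the exponents in (b) are the numbers $\lambda_{c}^{p_k,\dots,p_m}=\bigl(1/c-(1/p_k+\cdots+1/p_m)\bigr)^{-1}$. The standing hypothesis \eqref{cond} is what makes everything run: since each $1/p_i\ge0$, it forces $1/p_i<1/c$, hence $p_i>c\ge2$ for every $i$, and consequently every $\lambda_{c}^{p_k,\dots,p_m}$ is finite and $>c$. I would first reduce (b)$\Rightarrow$(a) to the extremal case $q_k=\lambda_{c}^{p_k,\dots,p_m}$: the left-hand side of (a), read as an iterated norm of the array $(\|A(e_{j_1},\dots,e_{j_m})\|)$, is coordinatewise nonincreasing in each exponent $q_k$, so the extremal case implies all larger exponents at once.

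The base case $m=1$ rests on a single weighted application of cotype. For bounded $u\colon X_{p}\to Y$ with $p>c$ and any scalars $(a_j)$, applying the cotype inequality to the vectors $a_j\,ue_j$ and using the sign-invariance $\|\sum_j r_j(t)a_j e_j\|_{X_p}=(\sum_j|a_j|^{p})^{1/p}$ gives
\begin{align*}
\Bigl(\sum_j |a_j|^{c}\,\|ue_j\|^{c}\Bigr)^{1/c}
&\le C\Bigl(\int_0^1\Bigl\|u\Bigl(\sum_j r_j(t)\,a_j e_j\Bigr)\Bigr\|^{2}\,dt\Bigr)^{1/2}\\
&\le C\,\|u\|\Bigl(\sum_j |a_j|^{p}\Bigr)^{1/p}.
\end{align*}
Choosing $a_j=\|ue_j\|^{\,c/(p-c)}$ (and $a_j\equiv1$ in the degenerate case $p=\infty$) makes both extremes powers of $S:=\sum_j\|ue_j\|^{\lambda_{c}^{p}}$, whereupon the inequality collapses to $S^{1/c}\le C\|u\|\,S^{1/p}$, i.e.\ $S^{1/c-1/p}=S^{1/\lambda_{c}^{p}}\le C\|u\|$, which is exactly (a) for $m=1$. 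For the inductive step I would freeze the first $m-1$ variables: for $\|x_1\|,\dots,\|x_{m-1}\|\le1$ the map $u:=A(x_1,\dots,x_{m-1},\cdot)$ is linear with $\|u\|\le\|A\|$, so the base case shows that $B(x_1,\dots,x_{m-1}):=(A(x_1,\dots,x_{m-1},e_{j_m}))_{j_m}$ defines a bounded $(m-1)$-linear operator $B\colon X_{p_1}\times\cdots\times X_{p_{m-1}}\to\ell_{q_m}(Y)$ with $\|B\|\le C\|A\|$, where $q_m=\lambda_{c}^{p_m}$. Now $\ell_{q_m}(Y)$ is an infinite-dimensional Banach space with $\cot\ell_{q_m}(Y)=q_m$ (here $q_m>c\ge2$ is used), and \eqref{cond} for $B$ reads $1/p_1+\cdots+1/p_{m-1}<1/q_m=1/c-1/p_m$, which is again \eqref{cond}. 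The inductive hypothesis applied to $B$ yields the iterated inequality with exponents $\lambda_{q_m}^{p_k,\dots,p_{m-1}}$, and the algebraic coincidence $\lambda_{q_m}^{p_k,\dots,p_{m-1}}=\lambda_{c}^{p_k,\dots,p_m}$ (immediate from $1/q_m=1/c-1/p_m$) matches the target exponents; since $\|B(e_{j_1},\dots,e_{j_{m-1}})\|_{\ell_{q_m}(Y)}$ is precisely the innermost sum of (a), this closes the induction.

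For (a)$\Rightarrow$(b) I would show each exponent is forced. Freezing the coordinates $1,\dots,k-1$ at fixed unit vectors turns (a) into an inequality for an $(m-k+1)$-linear operator in the variables $k,\dots,m$ whose outermost exponent is $q_k$, so it suffices to prove that the leading exponent of an $\ell$-linear operator into $Y$ cannot be smaller than $\lambda_{c}^{r_1,\dots,r_\ell}$. For this I would build test operators by tensoring two ingredients: a scalar $\ell$-linear form supplied by the Kahane--Salem--Zygmund inequality, whose norm on $X_{r_1}^n\times\cdots\times X_{r_\ell}^n$ is of optimal order in $n$ while all its coefficients have modulus one; and vectors $z_1,\dots,z_n\in Y$ which, because $c$ is the infimum of the cotypes, witness the failure of cotype $c-\varepsilon$, so that $(\sum_i\|z_i\|^{c-\varepsilon})^{1/(c-\varepsilon)}$ is large compared with $(\int_0^1\|\sum_i r_i(t)z_i\|^2\,dt)^{1/2}$. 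Inserting the resulting operator into (a), bounding $\|A\|$ from above via Kahane--Salem--Zygmund together with the Rademacher control of the $z_i$, and bounding the left-hand side from below by the largeness of the $z_i$, yields an inequality $n^{\alpha}\lesssim C\,n^{\beta(q_k,\varepsilon)}$ valid for all $n$; letting $n\to\infty$ and then $\varepsilon\to0$ forces $q_k\ge\lambda_{c}^{p_k,\dots,p_m}$.

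The main obstacle is the necessity direction. The sufficiency reduction closes cleanly thanks to the identity $\lambda_{q_m}^{p_k,\dots,p_{m-1}}=\lambda_{c}^{p_k,\dots,p_m}$ and the value of $\cot\ell_{q_m}(Y)$, but in (a)$\Rightarrow$(b) one must calibrate the Kahane--Salem--Zygmund exponents against the cotype defect so that the power of $n$ matches $\lambda_{c}^{p_k,\dots,p_m}$ exactly, and this bookkeeping, together with producing for every $\varepsilon>0$ extremal vectors witnessing the failure of cotype $c-\varepsilon$ and passing to the limit, is where the real work lies. The remaining ingredients (truncation to finitely many indices, sign-averaging, the monotonicity of mixed norms, and the standard computation of $\cot\ell_{s}(Y)$) are routine.
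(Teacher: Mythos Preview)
This theorem is not proved in the present paper: it is quoted from \cite{laaaron} and then used as a black box in the proof of Theorem~\ref{cotcrit}. There is therefore no proof here against which to compare your attempt.

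That said, your outline is sound and in the spirit of \cite{laaaron}. The inductive scheme for (b)$\Rightarrow$(a) closes correctly: the base case is the standard weighted-cotype computation, and in the step the identities $\cot\ell_{q_m}(Y)=q_m$ (valid because $q_m=\lambda_c^{p_m}>c$ and $Y$ has cotype $c$, so $\ell_{q_m}(Y)$ has cotype $q_m$ and contains $\ell_{q_m}$) together with $\lambda_{q_m}^{p_k,\dots,p_{m-1}}=\lambda_c^{p_k,\dots,p_m}$ match the target exponents exactly. For (a)$\Rightarrow$(b), Kahane--Salem--Zygmund forms tensored with vectors witnessing the cotype of $Y$ is the expected route; note that the paper, when it proves the extension Theorem~\ref{cotcrit}, invokes the Maurey--Pisier factorization, obtaining vectors $z_j$ with the two-sided control $C_1\|(a_j)\|_\infty\le\bigl\|\sum a_jz_j\bigr\|\le C_2\|(a_j)\|_{\cot Y}$, which is sharper than ``failure of cotype $c-\varepsilon$'' and removes your $\varepsilon\to0$ limit. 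One small point to make explicit: ``freezing the coordinates $1,\dots,k-1$'' only gives the reduced inequality for operators of the special form $A(e_{j_1},\dots,e_{j_{k-1}},\cdot,\dots,\cdot)$; to test against an arbitrary $(m-k+1)$-linear operator into $Y$ you must first extend it to an $m$-linear one by tensoring with coordinate functionals in the frozen slots --- routine, but needed for the logic to flow.
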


We need the following extension of the previous theorem, relaxing the
hypothesis (\ref{cond}). Besides, below we have $\left(  q_{1},...,q_{m}%
\right)  \in(0,\infty]^{m}$ while in Theorem \ref{661} we have $\left(
q_{1},...,q_{m}\right)  \in(0,\infty)^{m}$.

\begin{theorem}
\label{cotcrit}\ Let $\left(  q_{1},...,q_{m}\right)  \in(0,\infty]^{m},$
$\left(  p_{1},...,p_{m}\right)  \,\in\lbrack1,\infty]^{m}$ and $Y$ be an
infinite-dimensional Banach space with cotype $\cot Y.$ The following
assertions are equivalent:

(a) There is a constant $C_{p_{1},...,p_{m}}^{Y}\geq1$ such that%
\begin{equation}
\left(  \sum_{j_{1}=1}^{\infty}\left(  \sum_{j_{2}=1}^{\infty}\cdots\left(
\sum_{j_{m}=1}^{\infty}\left\Vert A(e_{j_{1}},...,e_{j_{m}})\right\Vert
^{q_{m}}\right)  ^{\frac{q_{m-1}}{q_{m}}}\cdots\right)  ^{\frac{q_{1}}{q_{2}}%
}\right)  ^{\frac{1}{q_{1}}}\leq C_{p_{1},...,p_{m}}^{Y}\left\Vert
A\right\Vert \label{yy}%
\end{equation}
for all continuous $m$-linear operators $A:X_{p_{1}}\times\cdots\times
X_{p_{m}}\rightarrow Y.$

(b) The exponents $q_{1},...,q_{m}$ satisfy
\[
q_{1}\geq\lambda_{\cot Y}^{p_{1},...,p_{m}},q_{2}\geq\lambda_{\cot Y}%
^{p_{2},...,p_{m}},\ldots,q_{m-1}\geq\lambda_{\cot Y}^{p_{m-1},p_{m}}%
,q_{m}\geq\lambda_{\cot Y}^{p_{m}}.
\]

\end{theorem}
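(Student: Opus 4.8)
The plan is to derive Theorem~\ref{cotcrit} from Theorem~\ref{661} in the direction (b)$\Rightarrow$(a), and to establish the reverse implication by a direct extremal construction that simultaneously covers the supercritical range $\frac1{p_1}+\cdots+\frac1{p_m}\ge\frac1{\cot Y}$ excluded by \eqref{cond} and the values $q_i=\infty$. Two elementary devices are used throughout. The first is the monotonicity of the left-hand side of \eqref{yy} in each exponent: raising any $q_i$ (in particular to $\infty$) only decreases the nested mixed norm. The second is a coordinate restriction: testing \eqref{yy} on the forms $A(x^{(1)},\dots,x^{(m)})=x^{(1)}_1\cdots x^{(k-1)}_1\,B(x^{(k)},\dots,x^{(m)})$, whose coefficients vanish unless $j_1=\cdots=j_{k-1}=1$ and for which $\|A\|=\|B\|$, shows that \eqref{yy} forces the analogous $(m-k+1)$-linear inequality for every $B$ on $X_{p_k}\times\cdots\times X_{p_m}$ with exponents $q_k,\dots,q_m$ and the same constant. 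Thus each ``tail'' $q_k,\dots,q_m$ inherits an inequality of the same type in which $q_k$ plays the role of the leading exponent.

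For (b)$\Rightarrow$(a) I would locate the critical index $k^\ast$, the least $k$ with $\frac1{p_k}+\cdots+\frac1{p_m}<\frac1{\cot Y}$; for every $k<k^\ast$ one has $\lambda_{\cot Y}^{p_k,\dots,p_m}=\infty$, so (b) forces $q_k=\infty$ and the first $k^\ast-1$ sums are suprema. Fixing $j_1,\dots,j_{k^\ast-1}$ turns the inner expression into the $(q_{k^\ast},\dots,q_m)$-mixed norm of $B=A(e_{j_1},\dots,e_{j_{k^\ast-1}},\cdot,\dots,\cdot)$, an $(m-k^\ast+1)$-linear form with $\|B\|\le\|A\|$ whose indices now satisfy \eqref{cond}. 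Since $q_{k^\ast},\dots,q_m$ meet the hypotheses of Theorem~\ref{661} (after first using the exponent monotonicity to replace any infinite $q_i$, $i\ge k^\ast$, by its finite threshold), that theorem bounds this mixed norm by $C\|B\|\le C\|A\|$; taking the supremum over the fixed indices yields \eqref{yy}. The degenerate cases $k^\ast=1$ (hypothesis \eqref{cond} already holds) and $k^\ast=m+1$ (all exponents infinite, the left-hand side being $\sup_{j_1,\dots,j_m}\|A(e_{j_1},\dots,e_{j_m})\|\le\|A\|$) are immediate.

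For (a)$\Rightarrow$(b) it suffices, by the restriction device, to prove a single statement: any inequality of the form \eqref{yy} forces its leading exponent $q_1$ to satisfy $q_1\ge\lambda_{\cot Y}^{p_1,\dots,p_m}$, with no hypothesis on $p_1,\dots,p_m$ and allowing $q_2,\dots,q_m\in(0,\infty]$; applying this to each tail then gives all the conditions in (b). Here I would test \eqref{yy} on the diagonal forms $A(x^{(1)},\dots,x^{(m)})=\sum_{j=1}^N x^{(1)}_j\cdots x^{(m)}_j\,y_j$, where $y_1,\dots,y_N\in Y$ are chosen, by the finite representability of $\ell_{\cot Y}$ in any infinite-dimensional space of cotype $\cot Y$ (Maurey--Pisier), to be $(1+\varepsilon)$-equivalent to the unit vector basis of $\ell_{\cot Y}^N$. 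For such a form the mixed norm collapses along the diagonal to $\big(\sum_{j=1}^N\|y_j\|^{q_1}\big)^{1/q_1}\approx N^{1/q_1}$, irrespective of the trailing exponents, while the $\ell_{\cot Y}$-estimate together with the generalized H\"older inequality gives $\|A\|\le(1+\varepsilon)\sup_{\|x^{(i)}\|_{p_i}\le1}\big\|(x^{(1)}_j\cdots x^{(m)}_j)_j\big\|_{\cot Y}\lesssim N^{\max\{1/\cot Y-(1/p_1+\cdots+1/p_m),\,0\}}$. Feeding these into \eqref{yy} and letting $N\to\infty$ forces $\frac1{q_1}\le\max\{\frac1{\cot Y}-(\frac1{p_1}+\cdots+\frac1{p_m}),0\}$, i.e. $q_1\ge\lambda_{\cot Y}^{p_1,\dots,p_m}$ (with the convention that this means $q_1=\infty$ in the supercritical case).

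The step I expect to be the main obstacle is the upper estimate for $\|A\|$ in this construction, as it is the only place where the cotype of $Y$ genuinely enters and where the two regimes are reconciled. The point is that the same diagonal form yields the sharp leading exponent in one stroke: in the subcritical range the bound $\|A\|\lesssim N^{1/\cot Y-\sum 1/p_i}$ reproduces the optimality already contained in Theorem~\ref{661}, whereas in the supercritical range the exponent saturates at $0$, so that $\|A\|$ stays bounded while $N^{1/q_1}$ diverges for every finite $q_1$, thereby forcing $q_1=\infty$. Once the leading-exponent statement is available in this uniform form, combining it with the restriction device and the exponent monotonicity closes both implications and matches them against the definition of $\lambda_{\cot Y}$.
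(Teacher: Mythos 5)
Your proposal is correct, and its (b)$\Rightarrow$(a) direction coincides with the paper's: locate the least index with a subcritical tail (your $k^\ast$ is the paper's $i_0$), observe that (b) forces the leading exponents to be $\infty$ so the outer sums are suprema, freeze $e_{j_1},\dots,e_{j_{k^\ast-1}}$, apply Theorem \ref{661} to the restricted operator at the finite threshold exponents $\lambda_{\cot Y}^{p_{k^\ast},\dots,p_m},\dots,\lambda_{\cot Y}^{p_m}$, and finish by monotonicity of the mixed norms in each exponent. In the necessity direction (a)$\Rightarrow$(b) you take a genuinely different, and arguably cleaner, route. The paper uses the Maurey--Pisier diagonal operator $A_n(x^{(1)},\dots,x^{(m)})=\sum_j x_j^{(1)}\cdots x_j^{(m)}z_j$ only in the supercritical regime, where the H\"older estimate (the paper's computation culminating in $\Vert A_n\Vert\leq C_2$) yields $q_1=\infty$; it then iterates through $i_0=2,3,\dots$, multiplying the diagonal by degenerate factors $x_1^{(1)}\cdots$, and once the tail becomes subcritical it delegates the remaining conditions to the necessity half of Theorem \ref{661}. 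You instead prove a single uniform leading-exponent lemma -- the same diagonal example, but with the H\"older bound kept in the two-regime form $\Vert A\Vert\lesssim N^{\max\{1/\cot Y-(1/p_1+\cdots+1/p_m),\,0\}}$ -- and then obtain every condition $q_k\geq\lambda_{\cot Y}^{p_k,\dots,p_m}$ by your coordinate-restriction device. The step you flagged as the main obstacle is in fact unproblematic: in the supercritical case it is exactly the paper's estimate, and in the subcritical case the extra factor $N^{1/\cot Y-\sum 1/p_i}$ comes from H\"older against the constant sequence in $\ell_s$ with $1/s=1/\cot Y-\sum 1/p_i\geq 0$; this is also the standard extremal example behind the optimality in Theorem \ref{661}, so your argument effectively inlines that necessity proof rather than citing it. What your organization buys is self-containedness and uniformity (no case analysis over $i_0$, no appeal to Theorem \ref{661} for necessity); what the paper's buys is brevity, reusing \cite{laaaron} wherever hypothesis \eqref{cond} holds. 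One small remark: you invoke the full Maurey--Pisier theorem ($(1+\varepsilon)$-equivalence of $y_1,\dots,y_N$ to the $\ell_{\cot Y}^N$ basis), while the paper needs only the weaker finite factorization of $\ell_{\cot Y}\hookrightarrow\ell_\infty$ with two-sided constants $C_1,C_2$; both are legitimate since $\cot Y$, the infimum of the cotypes, is precisely the Maurey--Pisier exponent, and your lower bound on the mixed norm only uses $\Vert y_j\Vert$ bounded below, as in the paper.
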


\begin{proof}
We begin by proving the direct implication. We just need to consider the
case\textbf{\ }%
\begin{equation}
\frac{1}{p_{1}}+\cdots+\frac{1}{p_{m}}\geq\frac{1}{\cot Y},\label{maior}%
\end{equation}
since the other case is covered by Theorem \ref{661}. By the Maurey--Pisier
factorization result (see \cite[pages 286, 287]{Di}), the Banach space $Y$
finitely factors the formal inclusion $\ell_{\cot Y}\hookrightarrow
\ell_{\infty}$, i.e., there are universal constants $C_{1},C_{2}>0$ such that,
for all $n$, there are vectors $z_{1},...,z_{n}\in Y$ satisfying%
\begin{equation}
C_{1}\left\Vert \left(  a_{j}\right)  _{j=1}^{n}\right\Vert _{\infty}%
\leq\left\Vert \sum\limits_{j=1}^{n}a_{j}z_{j}\right\Vert \leq C_{2}\left(
\sum\limits_{j=1}^{n}\left\vert a_{j}\right\vert ^{\cot Y}\right)  ^{1/\cot
Y},\label{pisi}%
\end{equation}
for all sequences of scalars $\left(  a_{j}\right)  _{j=1}^{n}.$ Consider the
continuous $m$-linear operator $A_{n}\colon X_{p_{1}}\times\cdots\times
X_{p_{m}}\rightarrow Y$ given by%
\begin{equation}
A_{n}(x^{(1)},\ldots,x^{(m)})=\sum\limits_{j=1}^{n}x_{j}^{\left(  1\right)
}x_{j}^{\left(  2\right)  }\cdots x_{j}^{\left(  m\right)  }z_{j}.\label{22aa}%
\end{equation}
By (\ref{pisi}) and the H\"{o}lder inequality we have%
\begin{align}
\Vert A_{n}\Vert &  =\sup_{\left\Vert x^{\left(  1\right)  }\right\Vert
_{p_{1}}\leq1,\ldots,\left\Vert x^{\left(  m\right)  }\right\Vert _{p_{m}}%
\leq1}\left\Vert \sum\limits_{j=1}^{n}x_{j}^{\left(  1\right)  }%
...x_{j}^{\left(  m\right)  }z_{j}\right\Vert \label{33aa}\\
&  \leq\sup_{\left\Vert x^{\left(  1\right)  }\right\Vert _{p_{1}}\leq
1,\ldots,\left\Vert x^{\left(  m\right)  }\right\Vert _{p_{m}}\leq1}%
C_{2}\left(  \sum\limits_{j=1}^{n}\left\vert x_{j}^{\left(  1\right)
}...x_{j}^{\left(  m\right)  }\right\vert ^{\cot Y}\right)  ^{1/\cot
Y}\nonumber\\
&  \leq\sup_{\left\Vert x^{\left(  1\right)  }\right\Vert _{p_{1}}\leq
1,\ldots,\left\Vert x^{\left(  m\right)  }\right\Vert _{p_{m}}\leq1}%
C_{2}\left(  \prod_{k=1}^{m}\left(  \sum\limits_{j=1}^{n}\left\vert
x_{j}^{\left(  k\right)  }\right\vert ^{p_{k}}\right)  ^{1/p_{k}}\right)
\nonumber\\
&  =C_{2}.\nonumber
\end{align}
Note that, by (\ref{22aa}), we have
\[
\left(  \sum_{j_{1}=1}^{n}\left(  \sum_{j_{2}=1}^{n}\cdots\left(  \sum
_{j_{m}=1}^{n}\left\Vert A_{n}(e_{j_{1}},...,e_{j_{m}})\right\Vert ^{q_{m}%
}\right)  ^{\frac{q_{m-1}}{q_{m}}}\cdots\right)  ^{\frac{q_{1}}{q_{2}}%
}\right)  ^{\frac{1}{q_{1}}}=\left(  \sum_{j=1}^{n}\left\Vert z_{j}\right\Vert
^{q_{1}}\right)  ^{\frac{1}{q_{1}}}.
\]
Thus, by (\ref{pisi}) we conclude that%
\[
\left(  \sum_{j_{1}=1}^{n}\left(  \sum_{j_{2}=1}^{n}\cdots\left(  \sum
_{j_{m}=1}^{n}\left\Vert A_{n}(e_{j_{1}},\ldots,e_{j_{m}})\right\Vert ^{q_{m}%
}\right)  ^{\frac{q_{m-1}}{q_{m}}}\cdots\right)  ^{\frac{q_{1}}{q_{2}}%
}\right)  ^{\frac{1}{q_{1}}}\geq C_{1}n^{\frac{1}{q_{1}}}.
\]
Combining the previous inequality with (\ref{yy}) and (\ref{33aa}) we conclude
that%
\[
C_{1}n^{1/q_{1}}\leq C_{p_{1},...,p_{m}}^{Y}C_{2}.
\]
Thus, since $n$ is arbitrary, we have%
\begin{equation}
q_{1}=\infty=\lambda_{\cot Y}^{p_{1},...,p_{m}}.\label{899}%
\end{equation}
If
\[
\frac{1}{p_{i}}+\cdots+\frac{1}{p_{m}}\geq\frac{1}{\cot Y}%
\]
for all $i$, the proof is immediate. Otherwise, let $i_{0}\in\{2,3,\ldots,m\}$
be the smallest index such that
\[
\left\{
\begin{array}
[c]{c}%
\frac{1}{p_{i_{0}}}+\cdots+\frac{1}{p_{m}}<\frac{1}{\cot Y},\\
\frac{1}{p_{i_{0}-1}}+\cdots+\frac{1}{p_{m}}\geq\frac{1}{\cot Y}.
\end{array}
\right.
\]
If $i_{0}=2,$ note that by (\ref{899}) we have%
\begin{equation}
\sup_{j_{1}}\left(  \sum_{j_{2}=1}^{\infty}\left(  \cdots\left(  \sum
_{j_{m}=1}^{\infty}\left\Vert A(e_{j_{1}},\ldots,e_{j_{m}})\right\Vert
^{q_{m}}\right)  ^{\frac{q_{m-1}}{q_{m}}}\cdots\right)  ^{\frac{q_{2}}{q_{3}}%
}\right)  ^{\frac{1}{q_{2}}}\leq C_{p_{1},...,p_{m}}^{Y}\left\Vert
A\right\Vert \label{dez}%
\end{equation}
for all continuous $m$-linear operators $A:X_{p_{1}}\times\cdots\times
X_{p_{m}}\rightarrow Y.$ From (\ref{dez}) it is simple to show that
\[
\left(  \sum_{j_{2}=1}^{\infty}\left(  \cdots\left(  \sum_{j_{m}=1}^{\infty
}\left\Vert A(e_{j_{2}},\ldots,e_{j_{m}})\right\Vert ^{q_{m}}\right)
^{\frac{q_{m-1}}{q_{m}}}\cdots\right)  ^{\frac{q_{2}}{q_{3}}}\right)
^{\frac{1}{q_{2}}}\leq C_{p_{1},...,p_{m}}^{Y}\left\Vert A\right\Vert ,
\]
for all continuous $\left(  m-1\right)  $-linear operators $A:X_{p_{2}}%
\times\cdots\times X_{p_{m}}\rightarrow Y$. Since%
\[
\frac{1}{p_{2}}+\cdots+\frac{1}{p_{m}}<\frac{1}{\cot Y},
\]
by Theorem \ref{661} we conclude that
\[
q_{2}\geq\lambda_{\cot Y}^{p_{2},...,p_{m}},q_{3}\geq\lambda_{\cot Y}%
^{p_{3},...,p_{m}},\ldots,q_{m-1}\geq\lambda_{\cot Y}^{p_{m-1},p_{m}}%
,q_{m}\geq\lambda_{\cot Y}^{p_{m}}.
\]

If $i_{0}=3$, we consider
\[
A(x^{(1)},\ldots,x^{(m)})=x_{1}^{(1)}\sum\limits_{j=1}^{n}x_{j}^{(2)}\cdots
x_{j}^{(m)}z_{j}%
\]
and we can imitate the previous arguments to conclude that%
\[
q_{2}=\infty=\lambda_{\cot Y}^{p_{2},...,p_{m}}.
\]
and hence%
\begin{equation}
\sup_{j_{1},j_{2}}\left(  \sum_{j_{3}=1}^{\infty}\left(  \cdots\left(
\sum_{j_{m}=1}^{\infty}\left\Vert A(e_{j_{1}},\ldots,e_{j_{m}})\right\Vert
^{q_{m}}\right)  ^{\frac{q_{m-1}}{q_{m}}}\cdots\right)  ^{\frac{q_{3}}{q_{4}}%
}\right)  ^{\frac{1}{q_{3}}}\leq C_{p_{1},...,p_{m}}^{Y}\left\Vert
A\right\Vert ,\label{555}%
\end{equation}
for all continuous $m$-linear operators $A:X_{p_{1}}\times\cdots\times
X_{p_{m}}\rightarrow Y.$ Again, it is plain that
\[
\left(  \sum_{j_{3}=1}^{\infty}\left(  \cdots\left(  \sum_{j_{m}=1}^{\infty
}\left\Vert A(e_{j_{3}},\ldots,e_{j_{m}})\right\Vert ^{q_{m}}\right)
^{\frac{q_{m-1}}{q_{m}}}\cdots\right)  ^{\frac{q_{i_{0}+1}}{q_{i_{0}}}%
}\right)  ^{\frac{1}{q_{i_{0}}}}\leq C_{p_{1},...,p_{m}}^{Y}\left\Vert
A\right\Vert ,
\]
for all continuous $\left(  m-2\right)  $-linear operators $A:X_{p_{3}}%
\times\cdots\times X_{p_{m}}\rightarrow Y$. Since%
\[
\frac{1}{p_{3}}+\cdots+\frac{1}{p_{m}}<\frac{1}{\cot Y},
\]
by Theorem \ref{661} we have%
\[
q_{3}\geq\lambda_{\cot Y}^{p_{3},...,p_{m}},q_{4}\geq\lambda_{\cot Y}%
^{p_{4},...,p_{m}},\ldots,q_{m-1}\geq\lambda_{\cot Y}^{p_{m-1},p_{m}}%
,q_{m}\geq\lambda_{\cot Y}^{p_{m}}.
\]
We conclude the proof in a similar fashion for $i_{0}=4,...,m.$

Now we prove the reverse implication. The case%
\[
\frac{1}{p_{1}}+\cdots+\frac{1}{p_{m}}<\frac{1}{\cot Y}%
\]
is encompassed by Theorem \ref{661}. So, we shall consider
\[
\frac{1}{p_{1}}+\cdots+\frac{1}{p_{m}}\geq\frac{1}{\cot Y}.
\]
If
\[
\frac{1}{p_{i}}+\cdots+\frac{1}{p_{m}}\geq\frac{1}{\cot Y}%
\]
for all $i$, the proof is immediate. Otherwise, let $i_{0}\in\{2,...,m\}$ be
the smallest index such that
\[
\left\{
\begin{array}
[c]{c}%
\frac{1}{p_{i_{0}}}+\cdots+\frac{1}{p_{m}}<\frac{1}{\cot Y},\\
\frac{1}{p_{i_{0}-1}}+\cdots+\frac{1}{p_{m}}\geq\frac{1}{\cot Y}.
\end{array}
\right.
\]
We need to prove that there is a constant $C_{p_{1},...,p_{m}}^{Y}\geq1$, such
that%
\[
\sup_{j_{1},\ldots,j_{i_{0}-1}}\left(  \sum_{j_{i_{0}}=1}^{\infty}\left(
\cdots\left(  \sum_{j_{m}=1}^{\infty}\left\Vert A(e_{j_{1}},\ldots,e_{j_{m}%
})\right\Vert ^{q_{m}}\right)  ^{\frac{q_{m-1}}{q_{m}}}\cdots\right)
^{\frac{q_{i_{0}}}{q_{i_{0}+1}}}\right)  ^{\frac{1}{q_{i_{0}}}}\leq
C_{p_{1},...,p_{m}}^{Y}\left\Vert A\right\Vert
\]
for
\[
q_{i_{0}}\geq\lambda_{\cot Y}^{p_{i_{0}},...,p_{m}},...,q_{m}\geq\lambda_{\cot
Y}^{p_{m}}.
\]
By Theorem \ref{661}, we know that for any fixed vectors $e_{j_{1}%
},...,e_{j_{i_{0}-1}}$, there is a constant $C_{p_{i_{0}},...,p_{m}}^{Y}\geq
1$, such that
\[
\left(  \sum_{j_{i_{0}}=1}^{\infty}\left(  \cdots\left(  \sum_{j_{m}%
=1}^{\infty}\left\Vert A(e_{j_{1}},...,e_{j_{m}})\right\Vert ^{\lambda_{\cot
Y}^{p_{m}}}\right)  ^{\frac{\lambda_{\cot Y}^{p_{m-1},p_{m}}}{\lambda_{\cot
Y}^{p_{m}}}}\cdots\right)  ^{\frac{\lambda_{\cot Y}^{p_{i_{0}},...,p_{m}}%
}{\lambda_{\cot Y}^{p_{i_{0}+1},...,p_{m}}}}\right)  ^{\frac{1}{\lambda_{\cot
Y}^{p_{i_{0}},...,p_{m}}}}\leq C_{p_{1},\ldots,,p_{m}}^{Y}\left\Vert
A\right\Vert
\]
for all continuous $m$-linear operators $A\colon X_{p_{1}}\times\cdots\times
X_{p_{m}}\rightarrow Y$. Then,
\[
\sup_{j_{1},\ldots,j_{i_{0}-1}}\left(  \sum_{j_{i_{0}}=1}^{\infty}\left(
\cdots\left(  \sum_{j_{m}=1}^{\infty}\left\Vert A(e_{j_{1}},\ldots,e_{j_{m}%
})\right\Vert ^{\lambda_{\cot Y}^{p_{m}}}\right)  ^{\frac{\lambda_{\cot
Y}^{p_{m-1},p_{m}}}{\lambda_{\cot Y}^{p_{m}}}}\cdots\right)  ^{\frac
{\lambda_{\cot Y}^{p_{i_{0}},...,p_{m}}}{\lambda_{\cot Y}^{p_{i_{0}%
+1},...,p_{m}}}}\right)  ^{\frac{1}{\lambda_{\cot Y}^{p_{i_{0}},...,p_{m}}}%
}\leq C_{p_{1},\ldots,,p_{m}}^{Y}\left\Vert A\right\Vert
\]
for all continuous $m$-linear operators $A\colon X_{p_{1}}\times\cdots\times
X_{p_{m}}\rightarrow Y$.

To conclude the proof we just need to remark that
\begin{align*}
&  \sup_{j_{1},\ldots,j_{i_{0}-1}}\left(  \sum_{j_{i_{0}}=1}^{\infty}\left(
\cdots\left(  \sum_{j_{m}=1}^{\infty}\left\Vert A(e_{j_{1}},\ldots,e_{j_{m}%
})\right\Vert ^{q_{m}}\right)  ^{\frac{q_{m-1}}{q_{m}}}\cdots\right)
^{\frac{q_{i_{0}}}{q_{i_{0}+1}}}\right)  ^{\frac{1}{q_{i_{0}}}}\\
&  \leq\sup_{j_{1},\ldots,j_{i_{0}-1}}\left(  \sum_{j_{i_{0}}=1}^{\infty
}\left(  \cdots\left(  \sum_{j_{m}=1}^{\infty}\left\Vert A(e_{j_{1}}%
,\ldots,e_{j_{m}})\right\Vert ^{\lambda_{\cot Y}^{p_{m}}}\right)
^{\frac{\lambda_{\cot Y}^{p_{m-1},p_{m}}}{\lambda_{\cot Y}^{p_{m}}}}%
\cdots\right)  ^{\frac{\lambda_{\cot Y}^{p_{i_{0}},...,p_{m}}}{\lambda_{\cot
Y}^{p_{i_{0}+1},...,p_{m}}}}\right)  ^{\frac{1}{\lambda_{\cot Y}^{p_{i_{0}%
},...,p_{m}}}}%
\end{align*}
provided%
\[
q_{i_{0}}\geq\lambda_{\cot Y}^{p_{i_{0}},...,p_{m}},...,q_{m}\geq\lambda_{\cot
Y}^{p_{m}}.
\]

\end{proof}

\section{Proof of Theorem \ref{orl}}

Throughout this proof the adjoint of a Banach space $X$ will be denoted by
$X^{\ast}.$ To simplify the notation we will consider $\sigma(j)=j$ for all
$j;$ the other cases are similar. Let $\mathcal{L}^{m}\left(  X_{p_{1}}%
,\ldots,X_{p_{m}};Y\right)  $ denote the space of all continuous $m$-linear
operators from $X_{p_{1}}\times\cdots\times X_{p_{m}}$ to $Y.$ By the
canonical isometric isomorphism%
\[
\Psi:\mathcal{L}^{m}\left(  X_{p_{1}},,X_{p_{m}};\mathbb{K}\right)
\rightarrow\mathcal{L}^{m-1}\left(  X_{p_{1}},\dots,X_{p_{m-1}};\left(
X_{p_{m}}\right)  ^{\ast}\right)
\]
and duality in $X_{p_{m}}$, note that, if $R\in\mathcal{L}^{m}\left(
X_{p_{1}},\ldots,X_{p_{m}};\mathbb{K}\right)  $, we have%
\begin{equation}
R\left(  x_{1},...,x_{m-1},e_{n}\right)  =\Psi(R)\left(  x_{1},...,x_{m-1}%
\right)  (e_{n})=\left(  \Psi(R)\left(  x_{1},...,x_{m-1}\right)  \right)
_{n}.\label{900}%
\end{equation}
We start off by proving (1)$\Rightarrow$(2). Let us suppose that there is a
constant $C_{p_{1},\ldots,p_{m}}\geq1$ such that%
\begin{equation}
\left(  \sum_{j_{1}=1}^{\infty}\left(  \sum_{j_{2}=1}^{\infty}\cdots\left(
\sum_{j_{m}=1}^{\infty}\left\vert T(e_{j_{1}},...,e_{j_{m}})\right\vert
^{p_{m}^{\ast}}\right)  ^{\frac{q_{m-1}}{p_{m}^{\ast}}}\cdots\right)
^{\frac{q_{1}}{q_{2}}}\right)  ^{\frac{1}{q_{1}}}\leq C_{p_{1},...,p_{m}%
}\left\Vert T\right\Vert \label{1111}%
\end{equation}
for all continuous $m$-linear forms $T:X_{p_{1}}\times\cdots\times X_{p_{m}%
}\rightarrow\mathbb{K}$.

Consider a continuous $\left(  m-1\right)  $-linear operator $A:X_{p_{1}%
}\times\cdots\times X_{p_{m-1}}\rightarrow\left(  X_{p_{m}}\right)  ^{\ast}.$
Then, using our hypothesis, we have
\begin{align}
&  \left(  \sum_{j_{1}=1}^{\infty}\left(  \sum_{j_{2}=1}^{\infty}\cdots\left(
\sum_{j_{m-1}=1}^{\infty}\left\Vert A(e_{j_{1}},\ldots,e_{j_{m-1}})\right\Vert
_{\left(  X_{p_{m}}\right)  ^{\ast}}^{q_{m-1}}\right)  ^{\frac{q_{m-2}%
}{q_{m-1}}}\cdots\right)  ^{\frac{q_{1}}{q_{2}}}\right)  ^{\frac{1}{q_{1}}%
}\label{expos}\\
&  =\left(  \sum_{j_{1}=1}^{\infty}\left(  \sum_{j_{2}=1}^{\infty}%
\cdots\left(  \sum_{j_{m-1}=1}^{\infty}\left(  \sum_{j_{m}=1}^{\infty
}\left\vert \left(  A\left(  e_{j_{1}},\ldots,e_{j_{m-1}}\right)  \right)
_{j_{m}}\right\vert ^{p_{m}^{\ast}}\right)  ^{\frac{q_{m-1}}{p_{m}^{\ast}}%
}\right)  ^{\frac{q_{m-2}}{q_{m-1}}}\cdots\right)  ^{\frac{q_{1}}{q_{2}}%
}\right)  ^{\frac{1}{q_{1}}}\nonumber\\
&  \overset{\text{(\ref{900})}}{=}\left(  \sum_{j_{1}=1}^{\infty}\left(
\sum_{j_{2}=1}^{\infty}\cdots\left(  \sum_{j_{m-1}=1}^{\infty}\left(
\sum_{j_{m}=1}^{\infty}\left\vert \Psi^{-1}(A)(e_{j_{1}},\ldots,e_{j_{m}%
})\right\vert ^{p_{m}^{\ast}}\right)  ^{\frac{q_{m-1}}{p_{m}^{\ast}}}\right)
^{\frac{q_{m-2}}{q_{m-1}}}\cdots\right)  ^{\frac{q_{1}}{q_{2}}}\right)
^{\frac{1}{q_{1}}}\nonumber\\
&  \leq C_{p_{1},\ldots,p_{m}}\left\Vert \Psi^{-1}(A)\right\Vert \nonumber\\
&  \leq C_{p_{1},\ldots,p_{m}}\left\Vert A\right\Vert \nonumber
\end{align}
for all continuous $\left(  m-1\right)  $-linear operators $A:X_{p_{1}}%
\times\cdots\times X_{p_{m-1}}\rightarrow\left(  X_{p_{m}}\right)  ^{\ast}$.
Since $\left(  X_{p_{m}}\right)  ^{\ast}$ has cotype $\max\{p_{m}^{\ast},2\},$
by Theorem \ref{cotcrit}, the exponents $q_{1},\ldots,q_{m-1}$ in
(\ref{cotcrit}) satisfy%
\begin{equation}
q_{1}\geq\lambda_{\max\{p_{m}^{\ast},2\}}^{p_{1},...,p_{m-1}},q_{2}\geq
\lambda_{\max\{p_{m}^{\ast},2\}}^{p_{2},...,p_{m-1}},\ldots,q_{m-1}\geq
\lambda_{\max\{p_{m}^{\ast},2\}}^{p_{m-1}}. \label{oto}%
\end{equation}
Since
\[
1-\frac{1}{\max\{p_{m}^{\ast},2\}}=\frac{1}{\mu}%
\]
we have
\begin{align*}
\lambda_{\max\{p_{m}^{\ast},2\}}^{p_{i},...,p_{m-1}}  &  =\frac{1}%
{\max\left\{  \frac{1}{\max\{p_{m}^{\ast},2\}}-\left(  \frac{1}{p_{i}}%
+\cdots+\frac{1}{p_{m-1}}\right)  ,0\right\}  }\\
&  =\frac{1}{\max\left\{  1-\left(  \frac{1}{p_{i}}+\cdots+\frac{1}{p_{m-1}%
}+\frac{1}{\mu}\right)  ,0\right\}  }\\
&  =\delta^{p_{i},...,p_{m-1},\mu}%
\end{align*}
for all $i\in\left\{  1,...,m-1\right\}  $. Then, (\ref{oto}) can be re-stated
as%
\[
q_{1}\geq\delta^{p_{1},...,p_{m-1},\mu},q_{2}\geq\delta^{p_{2},...,p_{m-1}%
,\mu},\ldots,q_{m-1}\geq\delta^{p_{m-1},\mu}%
\]
and the proof is done.

(2)$\Rightarrow$(1). If the exponents $q_{1},...,q_{m-1}$ satisfy%
\[
q_{1}\geq\delta^{p_{1},...,p_{m-1},\mu},q_{2}\geq\delta^{p_{2},...,p_{m-1}%
,\mu},\ldots,q_{m-1}\geq\delta^{p_{m-1},\mu},
\]
we have, again, that the exponents $q_{1},\ldots,q_{m-1}$ satisfy
\[
q_{1}\geq\lambda_{r}^{p_{1},...,p_{m-1}},q_{2}\geq\lambda_{r}^{p_{2}%
,...,p_{m-1}},\ldots,q_{m-1}\geq\lambda_{r}^{p_{m-1}},
\]
with $r=\cot\left(  X_{p_{m}}\right)  ^{\ast}$. Thus, by Theorem
\ref{cotcrit}, there is a constant $C_{p_{1},...,p_{m-1}}^{\left(  X_{p_{m}%
}\right)  ^{\ast}}\geq1$ such that%
\[
\left(  \sum_{j_{1}=1}^{\infty}\left(  \sum_{j_{2}=1}^{\infty}\cdots\left(
\sum_{j_{m-1}=1}^{\infty}\left\Vert T(e_{j_{1}},\ldots,e_{j_{m-1}})\right\Vert
_{\left(  X_{p_{m}}\right)  ^{\ast}}^{q_{m-1}}\right)  ^{\frac{q_{m-2}%
}{q_{m-1}}}\cdots\right)  ^{\frac{q_{1}}{q_{2}}}\right)  ^{\frac{1}{q_{1}}%
}\leq C_{p_{1},...,p_{m-1}}^{\left(  X_{p_{m}}\right)  ^{\ast}}\left\Vert
T\right\Vert
\]
for all continuous $m$-linear operators $T:X_{p_{1}}\times\cdots\times
X_{p_{m-1}}\rightarrow\left(  X_{p_{m}}\right)  ^{\ast}.$

We thus have%
\begin{align*}
&  \left(  \sum_{j_{1}=1}^{\infty}\left(  \sum_{j_{2}=1}^{\infty}\cdots\left(
\sum_{j_{m-1}=1}^{\infty}\left(  \sum_{j_{m}=1}^{\infty}\left\vert A(e_{j_{1}%
},\ldots,e_{j_{m}})\right\vert ^{p_{m}^{\ast}}\right)  ^{\frac{q_{m-1}}%
{p_{m}^{\ast}}}\right)  ^{\frac{q_{m-2}}{q_{m-1}}}\cdots\right)  ^{\frac
{q_{1}}{q_{2}}}\right)  ^{\frac{1}{q_{1}}}\\
&  =\left(  \sum_{j_{1}=1}^{\infty}\left(  \sum_{j_{2}=1}^{\infty}%
\cdots\left(  \sum_{j_{m-1}=1}^{\infty}\left\Vert \Psi(A)(e_{j_{1}}%
,\ldots,e_{j_{m-1}})\right\Vert _{\left(  X_{p_{m}}\right)  ^{\ast}}^{q_{m-1}%
}\right)  ^{\frac{q_{m-2}}{q_{m-1}}}\cdots\right)  ^{\frac{q_{1}}{q_{2}}%
}\right)  ^{\frac{1}{q_{1}}}\\
&  \leq C_{p_{1},...,p_{m-1}}^{\left(  X_{p_{m}}\right)  ^{\ast}}\left\Vert
\Psi(A)\right\Vert \\
&  =C_{p_{1},...,p_{m-1}}^{\left(  X_{p_{m}}\right)  ^{\ast}}\left\Vert
A\right\Vert
\end{align*}
for all continuous $m$-linear forms $A:X_{p_{1}}\times\cdots\times X_{p_{m}%
}\rightarrow\mathbb{K}$.

\begin{remark}
The determination of the exact values of the constants involved in our main
theorem is probably a difficult task, as it happens with the Hardy--Littlewood
inequalities (see \cite{araujo, a22} and the references therein). However when
we are restricted to the bilinear case, with $p_{1}=p_{2}=\infty$ and $\sigma$
as the identity map, it is not difficult to check that we recover the constant
$\sqrt{2}$ from the Orlicz inequality.
\end{remark}

\bigskip

\end{document}